\tikzstyle{every picture}=[>=latex]
\def\R{\mathbb{R}}
\def\Co{\mathbb{C}}
\newcommand{\mM}{{\mathcal M}} 
\renewcommand{\d}{\,{\rm d}} 
\newcommand{\graffe}[1]{\left\{#1\right\}} 
\newcommand{\insieme}[2]{\graffe{#1 \colon #2}} 
\newcommand{\pvector}[1]{
  \begin{pmatrix}
    #1
  \end{pmatrix}} %
\newcommand{\ddirac}[1]{
  \,\boldsymbol{\delta}\!\pvector{#1}\!} %
\newcommand{\epsi}{\varepsilon} 
\newcommand{\tonde}[1]{\left(#1\right)} 
\newcommand{\ip}[1]{\langle #1 \rangle} 
\newcommand{\abs}[1]{\left|#1\right|}
\newcommand{\intersection}{\cap} 
\newtheorem{theorem}{Theorem}
\newtheorem{step}{Step}
\newtheorem{proposition}[theorem]{Proposition}
\numberwithin{equation}{section}
\title[Sharp Fourier restriction theory]{Some recent progress on sharp Fourier restriction theory}
\author{Damiano Foschi}
\address{
        Damiano Foschi\\
        Dipartimento di Matematica e Informatica\\
        Universit\`{a} di Ferrara\\
        Via Machiavelli 30, 
        44121 Ferrara, Italy}
\email{damiano.foschi@unife.it}
\author{Diogo Oliveira e Silva}
\address{
        Diogo Oliveira e Silva\\
        Hausdorff Center for Mathematics\\
        53115 Bonn, Germany}
\email{dosilva@math.uni-bonn.de}
\thanks{\it The second author was partially supported by the Hausdorff Center for Mathematics.}
\date{\today}                                           
\begin{document}

\begin{abstract}
The purpose of this note is to discuss several results that have been obtained in the last decade in the context of sharp adjoint Fourier restriction/Strichartz inequalities. 
Rather than aiming at full generality, we focus on several concrete examples of underlying manifolds with large groups of symmetries,
which sometimes allows for simple geometric proofs.
We mention several open problems along the way, and include an appendix on integration on manifolds using delta calculus.
\end{abstract}

\subjclass[2010]{42B10}
\keywords{Fourier restriction theory, Strichartz estimates, optimal constants, extremizers.}

\maketitle

\section{Introduction}

 Curvature causes the Fourier transform to decay. 
This observation links geometry to analysis, and lies at the base of several topics of modern harmonic analysis.
Given the long history of the subject, it is perhaps surprising that the possibility of restricting the Fourier transform to curved submanifolds of Euclidean space was not observed until the late sixties. 
 The Fourier transform of an integrable function is uniformly continuous, and as such, can be restricted to any subset. 
On the other hand, the Fourier transform of a square-integrable function is again square-integrable, and in view of Plancherel's theorem no better properties can be expected. 
In particular, restricting the Fourier transform of a square-integrable function to a set of zero Lebesgue measure is meaningless. 
The question is what happens for intermediate values of $1<p<2$.

 It is not hard to check that the Fourier transform of a radial function in $L^p(\R^d)$ defines a continuous function away from the origin whenever $1\leq p<\frac{2d}{d+1}$, see for instance~\cite{StSh4}. 
The corresponding problem for non-radial functions is considerably more delicate. 
 To introduce it, let $\mM$ be a smooth compact hypersurface in $\R^{d}$, endowed with a surface-carried measure $\d\mu=\psi \d\sigma$.
Here $\sigma$ denotes the surface measure of $\mM$, and the function $\psi$ is smooth and non-negative. Given $1<p<2$, for which exponents $q$ does the {\it a priori} inequality
\begin{equation}\label{restriction}
\Big(\int_\mM |\widehat{f}(\xi)|^q \d\mu_\xi\Big)^{\frac1q}\leq C \|f\|_{L^p(\R^{d})}
\end{equation}
hold? 
A complete answer for $q=2$ is given by the celebrated Tomas--Stein inequality. 
\begin{theorem}[\cite{St, To}]\label{TomasStein}
Suppose $\mM$ has non-zero Gaussian curvature at each point of the support of $\mu$. 
Then the restriction inequality \eqref{restriction} holds for $q=2$ and $1\leq p\leq\frac{2d+2}{d+3}$. 
\end{theorem}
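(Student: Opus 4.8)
The plan is to establish the equivalent dual form of \eqref{restriction} and to exploit its self-adjoint structure, thereby reducing the whole problem to a single convolution estimate whose engine is the curvature-induced decay of $\widehat{\d\mu}$.

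\emph{Reduction via duality and $TT^*$.} Let $Rf = \f|_\mM$ denote the restriction operator, so that \eqref{restriction} with $q=2$ is the bound $\|Rf\|_{L^2(\d\mu)} \leq C\|f\|_{L^p(\R^d)}$. Its adjoint is the extension operator $R^*g(x) = \int_\mM g(\xi)\, e^{2\pi i x\cdot\xi}\,\d\mu_\xi$, and by duality the inequality is equivalent to $\|R^*g\|_{L^{p'}(\R^d)} \leq C\|g\|_{L^2(\d\mu)}$. Invoking the $TT^*$ principle, this is in turn equivalent to the boundedness of the composition $R^*R$ from $L^p$ to $L^{p'}$, and a direct computation identifies this operator as convolution against the Fourier transform of the measure,
\[
R^*R f = f * \widehat{\d\mu} .
\]
It therefore suffices to prove $\|f*\widehat{\d\mu}\|_{L^{p'}(\R^d)} \lesssim \|f\|_{L^p(\R^d)}$ at the critical exponent $p = \frac{2d+2}{d+3}$, for which $p' = \frac{2d+2}{d-1}$.

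\emph{Curvature, decay, and dyadic pieces.} Here the hypothesis enters: since $\mM$ has non-vanishing Gaussian curvature on $\operatorname{supp}\mu$, the method of stationary phase yields the decay estimate
\[
\abs{\widehat{\d\mu}(\xi)} \lesssim (1+\abs{\xi})^{-\frac{d-1}{2}} .
\]
Fix a smooth dyadic partition $1 = \sum_{j\geq0}\psi_j$ with $\psi_j$ supported where $\abs{\xi}\sim 2^j$ for $j\geq1$, and put $K_j = \psi_j\,\widehat{\d\mu}$. Each piece admits two complementary bounds. The decay estimate gives $\|K_j\|_{L^\infty} \lesssim 2^{-j(d-1)/2}$, whence convolution with $K_j$ maps $L^1\to L^\infty$ with norm $\lesssim 2^{-j(d-1)/2}$. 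On the other hand $\widehat{K_j}$ is (a reflection of) the surface measure mollified at scale $2^{-j}$, so its density is bounded by $2^j$; by Plancherel, convolution with $K_j$ maps $L^2\to L^2$ with norm $\lesssim 2^j$.

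\emph{Interpolation and the endpoint obstruction.} Riesz--Thorin interpolation between these two bounds shows that convolution with $K_j$ maps $L^p\to L^{p'}$ with norm $\lesssim 2^{j(\theta - \frac{d-1}{2}(1-\theta))}$, where $\frac1p = 1-\frac{\theta}{2}$. When $\theta < \frac{d-1}{d+1}$, i.e. $p < \frac{2d+2}{d+3}$, the exponent is strictly negative and the pieces sum geometrically, which already proves \eqref{restriction} throughout the open subcritical range. At the endpoint $\theta = \frac{d-1}{d+1}$ the exponent vanishes: every dyadic block contributes a uniformly bounded operator, and the naive summation over $j$ diverges. I expect this borderline to be the principal difficulty. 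To capture it I would replace the triangle inequality by Stein's theorem on analytic interpolation of operator families. Embed $\widehat{\d\mu}$ into a family of kernels $K_z$ depending holomorphically on $z$ in a strip, calibrated so that on one boundary line the associated convolution operators are uniformly bounded on $L^2$, on the other they are uniformly bounded from $L^1$ to $L^\infty$, and $K_{z_0} = \widehat{\d\mu}$ at the relevant interior value $z_0$; the admissible growth permitted by Stein's theorem precisely absorbs the borderline loss that defeats real interpolation, and evaluation at $z_0$ yields the endpoint bound. Running this back through the $TT^*$ reduction furnishes \eqref{restriction} at the critical exponent, while the trivial estimate $\|Rf\|_{L^2(\d\mu)} \lesssim \|f\|_{L^1}$ (valid since $\f$ is bounded and $\mu$ is finite), together with interpolation of $R$ between $p=1$ and $p=\frac{2d+2}{d+3}$, recovers the full range $1\leq p\leq\frac{2d+2}{d+3}$.
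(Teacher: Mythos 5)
Your proposal is essentially the standard Tomas--Stein argument, which is exactly what the paper itself describes (it gives no proof of its own, citing \cite{St, To} and summarizing precisely this route in the introduction: the $T^*T$ reduction to convolution with $\widehat{\mu}$, stationary-phase decay from the curvature hypothesis, dyadic summation in the open range, and Stein's complex interpolation of an analytic family of operators at the endpoint). The only caveat is that your endpoint step is a plan rather than an execution --- you would still need to exhibit a concrete analytic family (e.g.\ built from the dyadic pieces $\sum_j 2^{jz}K_j$, or from analytically continued densities) and verify the boundary bounds with admissible growth --- but the approach is the correct and intended one.
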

\noindent 
The range of exponents is sharp, since no $L^p\to L^2(\mu)$ restriction can hold for $\mM$ if~$p>\frac{2d+2}{d+3}$. 
This is shown via the famous Knapp example, which basically consists of testing  the inequality dual to \eqref{restriction} against the characteristic function of a small cap on $\mM$. 
Moreover, some degree of curvature is essential, as there can be no meaningful restriction to a hyperplane except in the trivial case when $p=1$ and $q=\infty$. 
An example to keep in mind is that of the unit sphere $\mathbb{S}^{d-1}=\{\xi\in\R^{d}: |\xi|=1\}$, with constant positive Gaussian curvature.
However, nonvanishing Gaussian curvature is a strong assumption that can be replaced by the nonvanishing of some principal curvatures, at the expense of decreasing the range of admissible exponents $p$.

 The question of what happens for values of $q<2$ is the starting point for the famous restriction conjecture. 
One is led by dimensional analysis and Knapp-type examples
to guess that the correct range for estimate \eqref{restriction} to hold is $1\leq p< \frac{2d}{d+1}$ and $q\leq (\frac{d-1}{d+1})p'$, where $p'=\frac p{p-1}$ denotes the dual exponent. 
This is depicted in Figure \ref{fig:restriction}.
Note that the endpoints of this relation are the trivial case $(p, q)=(1,\infty)$, and $p,q\to \frac{2d}{d+1}$. 

\begin{figure}[htbp]
  \centering

  \begin{tikzpicture} [scale = 5]
    \fill[black!5] (0, 0) rectangle (1, 1);
    \shade[left color = yellow, right color = green!50] (0.625, 1) -- (0.625, 0.625) -- (0.7, 0.5) -- (0.7, 1) -- cycle;
    \fill[green] (0.7, 1) -- (0.7, 0.5) -- (1, 0) -- (1, 1) -- cycle;
    \draw[->] (0, 0) -- (1.05, 0) node[right] {$\frac1p$};
    \draw[->] (0, 0) -- (0, 1.05) node[above] {$\frac1q$};
    \draw (0, 1) node[left] {$1$};
    \draw[dotted] (1, 1) -- (0, 0) node[below left] {$0$}; 
    \draw[dotted] (1, 1) -- (1, 0) node[below] {$1$};
    \draw[dotted] (0.625, 1) -- (0.625, 0);
    \draw[<-, black!60, text = black] (0.625, 0) .. controls (0.625, -0.06) .. (0.6, -0.06) node[left = -3] {$\frac{d+1}{2d}$};
    \draw[dotted] (0.7, 1) -- (0.7, 0);
    \draw[<-, black!60, text = black] (0.7, 0) .. controls (0.7, -0.06) .. (0.725, -0.06) node[right = -3] {$\frac{d+3}{2d+2}$};
    \draw[dotted] (1, 0.5) -- (0, 0.5) node[left] {$\frac12$};
    \draw[dotted] (0.625, 0.625) -- (0, 0.625) node[left] {$\frac{d+1}{2d}$};
    \draw[very thick, blue] (1, 0.5) -- (0.7, 0.5);
    \draw[<-, black!60, text = black] (0.7, 0.5) .. controls (0.625, 0.3) .. (0.5, 0.3) node[fill = white, left] {Tomas--Stein};
    \fill[red] (0.7, 0.5) circle [radius = 0.01];
    \draw[<-, black!60, text = black] (0.625, 0.625) .. controls (0.55, 0.8) .. (0.5, 0.8) node[fill = white, left, align=center] {restriction\\conjecture};
    \draw[red] (0.625, 0.625) circle [radius = 0.01];
  \end{tikzpicture}
  
  \caption{Range of exponents for the restriction problem}
  \label{fig:restriction}
\end{figure}
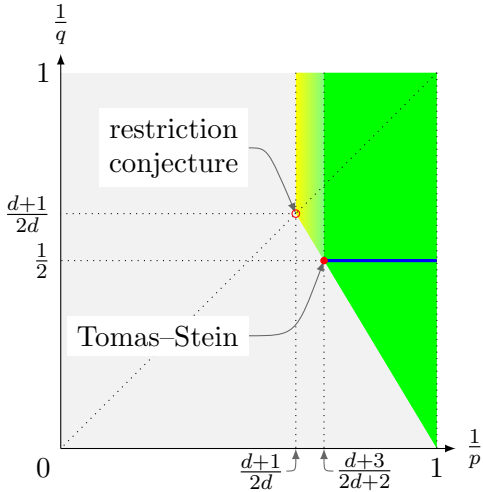

\noindent Despite tremendous effort and very promising partial progress, the restriction conjecture is only known to hold for $d=2$. 
The restriction conjecture implies the Kakeya conjecture and is implied by the Bochner--Riesz conjecture. 
Multilinear versions of the restriction and Kakeya conjectures have been established by Bennett, Carbery and Tao \cite{BCT}, and played a crucial role in the very recent work of Bourgain and Demeter \cite{BD} on $\ell^2$ decoupling.
For more on the restriction problem, and its relation to other prominent problems in modern harmonic analysis, we recommend the works \cite{St, Wo} and \mbox{especially \cite{T}.}

Tomas--Stein type restriction estimates are very much related to Strichartz estimates for linear partial differential equations of dispersion type. 
Let us illustrate this point in two cases,
that of solutions $u=u(x,t)$ to the homogeneous Schr\"odinger equation 
\begin{equation}\label{Schroedinger}
i\partial_t u=\Delta u,
\end{equation}
and that of solutions to the homogeneous wave equation 
\begin{equation}\label{Wave}
\partial^2_{t} u=\Delta u.
\end{equation}
In both situations, $(x,t)\in\R^{d+1}$.
The following theorem was originally proved by Strichartz.

\begin{theorem}[\cite{Str}]\label{Strichartz}
Let $d\geq 1$. Then there exists a constant $S>0$ such that 
\begin{equation}\label{Strichartzineq}
\|u\|_{L^{2+\frac 4d}(\R^{d+1})}\leq S \|f\|_{L^2(\R^d)},
\end{equation}
whenever $u$ is the solution of \eqref{Schroedinger} with initial data $u(x,0)=f(x)$.
If $d\geq 2$, then there exists a constant $W>0$ such that
\begin{equation}\label{StrichartzineqW}
\|u\|_{L^{2+\frac{4}{d-1}}(\R^{d+1})}\leq W \|(f,g)\|_{\dot{H}^{\frac12}(\R^d)\times \dot{H}^{-\frac12}(\R^d)},
\end{equation}
whenever $u$ is the solution of \eqref{Wave} with initial data $u(x,0)=f(x)$ and $\partial_t u(x,0)=g(x)$.
\end{theorem}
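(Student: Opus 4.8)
The plan is to recognize both inequalities as adjoint restriction (extension) estimates and to establish them through the $TT^*$ method, using dispersive decay in place of the compactness assumed in Theorem~\ref{TomasStein}. Writing the solution of \eqref{Schroedinger} as $u(\cdot,t)=e^{-it\Delta}f$, one has $\widehat{u}(\xi,t)=e^{it\abs{\xi}^2}\widehat{f}(\xi)$ on the Fourier side, so that $u$ is, up to the sign in the symbol, the Fourier extension of a density carried by the paraboloid $\{(\xi,\abs{\xi}^2):\xi\in\R^d\}\subset\R^{d+1}$; likewise the solution of \eqref{Wave} splits into the two half-wave evolutions $e^{\pm it\sqrt{-\Delta}}$, which are extensions off the light cone $\{(\xi,\abs{\xi})\}$. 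Both surfaces are non-compact, so instead of quoting Theorem~\ref{TomasStein} I would run a self-contained argument resting only on $L^2$ conservation together with a pointwise-in-time decay estimate.

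For \eqref{Strichartzineq} I would isolate two ingredients: the unitarity $\|e^{-it\Delta}f\|_{L^2_x}=\|f\|_{L^2_x}$ (Plancherel), and the dispersive bound $\|e^{-it\Delta}f\|_{L^\infty_x}\lesssim\abs{t}^{-d/2}\|f\|_{L^1_x}$, which follows from the explicit Gaussian kernel of the propagator (equivalently, from stationary phase, reflecting the nonvanishing curvature of the paraboloid). Interpolation between them gives $\|e^{-it\Delta}f\|_{L^q_x}\lesssim\abs{t}^{-d(1/2-1/q)}\|f\|_{L^{q'}_x}$ for $2\le q\le\infty$. I would then fix $q=2+\tfrac4d$ and argue by duality: the bound $\|e^{-it\Delta}f\|_{L^q_{t,x}}\lesssim\|f\|_{L^2_x}$ is equivalent to boundedness on $L^{q'}_{t,x}$ of the operator $F\mapsto\int_\R e^{-i(t-s)\Delta}F(\cdot,s)\,\d s$. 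Applying the interpolated dispersive bound inside the integral and then the Hardy--Littlewood--Sobolev inequality in the time variable reduces everything to the single numerical identity $d(1/2-1/q)=2/q$, which holds precisely at $q=2+\tfrac4d$; since the resulting order of fractional integration $\gamma=2/q=\tfrac{d}{d+2}$ lies strictly between $0$ and $1$, HLS applies with no endpoint obstruction.

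The wave estimate \eqref{StrichartzineqW} follows the same scheme, with two modifications forced by the geometry and the homogeneity. First, \eqref{Wave} is invariant under the isotropic scaling $u(x,t)\mapsto u(\lambda x,\lambda t)$, and the only data norm compatible with this scaling and with an $L^2$-based argument is the homogeneous one $\dot{H}^{1/2}\times\dot{H}^{-1/2}$; matching the scaling of $\|u\|_{L^q_{t,x}}$ to that of the data then \emph{forces} $q=2+\tfrac4{d-1}$. Second, the light cone has one vanishing principal curvature, so there is no global dispersive estimate; after a Littlewood--Paley decomposition into frequency shells $\abs{\xi}\sim\lambda$ one has $\|e^{it\sqrt{-\Delta}}P_\lambda h\|_{L^\infty_x}\lesssim\lambda^{d}(\lambda\abs{t})^{-(d-1)/2}\|h\|_{L^1_x}$, the exponent $(d-1)/2$ encoding the $d-1$ nonvanishing curvatures. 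Running the $TT^*$ and HLS argument at a single frequency, now with decay rate $(d-1)/2$ in place of $d/2$, reproduces $q=2+\tfrac4{d-1}$ and yields the single-frequency bound $\|e^{it\sqrt{-\Delta}}P_\lambda h\|_{L^q_{t,x}}\lesssim\lambda^{1/2}\|P_\lambda h\|_{L^2_x}$, in which the weight $\lambda^{1/2}$ is exactly the one defining $\dot{H}^{1/2}$. Summing the dyadic pieces via the Littlewood--Paley square-function bound in $L^q$ (valid since $q\ge2$) gives $\|e^{it\sqrt{-\Delta}}h\|_{L^q_{t,x}}\lesssim\|h\|_{\dot{H}^{1/2}}$, and recombining the half-waves (noting that $\sqrt{-\Delta}^{-1}$ maps $\dot{H}^{-1/2}$ into $\dot{H}^{1/2}$) produces \eqref{StrichartzineqW}.

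I expect the wave case to be the main obstacle. Producing and correctly normalizing the frequency-localized dispersive estimate, and tracking the powers of $\lambda$ so that the dyadic summation closes precisely against the $\dot{H}^{\pm1/2}$ norms, is the delicate point, and it is here that the degeneracy of the cone genuinely enters. By contrast the Schrödinger estimate is comparatively soft: because the propagator is a one-parameter unitary group with honest $\abs{t}^{-d/2}$ decay, the sharp diagonal exponent is reached through HLS in its non-endpoint range, so none of the analytic-interpolation or mixed-norm endpoint difficulties that make the general Stein--Tomas and Keel--Tao endpoints hard are encountered here.
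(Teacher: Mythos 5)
Your argument is correct, and it is a genuinely different route from the one the paper leans on. The paper does not prove Theorem~\ref{Strichartz} from scratch --- it cites Strichartz's original work, whose proof runs through restriction theory for quadratic surfaces in the spirit of Theorem~\ref{TomasStein} --- and the only place where the paper actually \emph{derives} instances of \eqref{Strichartzineq} and \eqref{StrichartzineqW} is \S\ref{sec:noncompact}, where the even integrality of the exponent ($p'=4$ for $d=2$, $p'=6$ for $d=1$ on the paraboloid; $p'=4$ for $d=3$ on the cone) lets one pass via Plancherel to an $L^2$ bound on $f\mu\ast f\mu$, compute the convolution measure explicitly by delta calculus, and conclude with Cauchy--Schwarz and H\"older. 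Your proposal instead follows what the paper calls the second method, due to Ginibre and Velo: $TT^*$ plus the pointwise-in-time dispersive decay plus Hardy--Littlewood--Sobolev in the time variable, with a Littlewood--Paley decomposition to compensate for the vanishing principal curvature of the cone. Your exponent bookkeeping checks out: $d(\tfrac12-\tfrac1q)=\tfrac2q$ gives $q=2+\tfrac4d$ with HLS order $\tfrac{d}{d+2}\in(0,1)$; the frequency-localized wave computation produces the weight $\lambda^{(d+1)/4\cdot(1-2/q)}=\lambda^{1/2}$ at $q=2+\tfrac4{d-1}$, which is exactly the $\dot H^{1/2}$ normalization, and the square-function summation is legitimate for $q\geq2$. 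What each approach buys is quite different: your dispersive/HLS argument works in every dimension and extends to the full non-endpoint family of mixed space-time norms, but it yields no information about the optimal constants $S$ and $W$ or about extremizers; the paper's convolution-measure argument is confined to the dimensions where $p'$ is an even integer, but there it identifies the sharp constant and characterizes the extremizers (Gaussians, exponentials), which is the actual subject of the survey. Since the theorem as stated only asserts the existence of \emph{some} constants $S$ and $W$, your proof fully establishes it.
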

\noindent A hint that Theorems \ref{TomasStein} and \ref{Strichartz} might be related comes from the numerology of the exponents: The Strichartz exponent $2+\frac4d$ coincides with the dual of the Tomas--Stein exponent in dimension $d+1$.
It turns out that
Strichartz estimates for the  Schr\"odinger equation correspond to restriction estimates on the paraboloid, 
whereas Strichartz estimates for the wave equation correspond to restriction estimates on the cone.
Note that the Gaussian curvature of the cone is identically zero
because one of its principal curvatures vanishes. 
This in turn translates into estimate \eqref{StrichartzineqW} holding for the Strichartz exponent in one lower dimension.
Perhaps more significantly, neither of these manifolds is compact. 
However, they exhibit some scale invariance properties that enable a reduction to the compact setting.
We shall return to this important point later in our discussion.

 In this note, we are interested in extremizers and optimal constants for sharp variants of restriction and Strichartz-type inequalities.
 Apart from their intrinsic mathematical interest and elegance, such sharp inequalities often allow for various refinements of existing inequalities.
The following are natural questions, which in particular can be posed for inequalities \eqref{restriction}, \eqref{Strichartzineq} and \eqref{StrichartzineqW}:
\begin{enumerate}
\item What is the value of the optimal constant?
\item Do extremizers exist?
\begin{enumerate}
\item If so, are they unique, possibly after applying the symmetries of the problem?
\item If not, what is the mechanism responsible for this lack of compactness?
\end{enumerate}
\item How do extremizing sequences behave?
\item What are some qualitative properties of extremizers?
\item What are necessary and sufficient conditions for a function to be an extremizer?
\end{enumerate}
Questions of this flavor have been asked in a variety of situations, and in the context of classical inequalities from Euclidean harmonic analysis  go back at least to the seminal work of Beckner \cite{Be} for the Hausdorff--Young inequality, and Lieb \cite{Li} for the Hardy--Littlewood--Sobolev inequality.
In comparison, sharp Fourier restriction inequalities have a relatively short history, with the first works on the subject going back to Kunze \cite{K}, Foschi \cite{F2} and Hundertmark--Zharnitsky \cite{HZ}.

Works addressing the existence of extremizers for inequalities of Fourier restriction type tend to be a {\it tour de force} in classical analysis,
using a variety of sophisticated techniques:
bilinear estimates  and refined estimates in $X^{s,b}$-type spaces \cite{CS, OS, Q, Ra},
concentration compactness arguments tailored to the specific problem in question \cite{FLS, JPS, JSS, K, Sh, Sh2, Sh3},
variants and generalizations of the Br\'ezis--Lieb lemma from functional analysis \cite{FVV, FVV2, FLS},
Fourier integral operators \cite{CS, FLS},
symmetrization techniques \cite{CS},
variational, perturbative and spectral analysis \cite{CS,  DMR, FLS, OS},
regularity theory for equations with critical scaling and additive combinatorics \cite{CS2},
among others.
In contrast, a full characterization of extremizers has been given in a few selected cases using much more elementary methods.
This is due to the presence of a large underlying group of symmetries which allows for several simplifications that ultimately reduce the problem to a simple geometric observation. 
We will try to illustrate this point in the upcoming sections.

Before doing so, let us briefly comment on some approaches that have been developed in the last decades in order to establish Tomas--Stein type Fourier restriction inequalities. 
For the sake of brevity, we specialize our discussion to inequalities \eqref{restriction} and \eqref{Strichartzineq}, but a more general setting should be kept in mind.
If $T$ denotes the restriction operator $f\mapsto \widehat{f}|_\mM$, then its adjoint $T^*$, usually called the extension operator, is given by 
$g\mapsto \widehat{g\mu}$, where the Fourier transform of the measure $g\mu$ is given by
\begin{equation}\label{gsigmahat}
\widehat{g\mu}(x)=\int_\mM g(\omega) e^{-i x\cdot \omega} \d\mu_\omega.\;\;\;(x\in\R^d)
\end{equation}
 The Tomas--Stein inequality at the endpoint $(p,q)=(\frac{2d+2}{d+3},2)$ is equivalent to the extension estimate
\begin{equation}\label{extensionineq}
\|\widehat{g\mu}\|_{L^{\frac{2d+2}{d-1}}(\R^d)}\leq C \|g\|_{L^2(\mM,\mu)}.
\end{equation}
If $g\in L^2(\mM,\mu)$, then the composition $T^*T(g)$ is well-defined, and a computation shows that 
$$T^*T: g\mapsto g\ast\widehat{\mu}.$$
Since the operator norms satisfy $\|T\|^2=\|T^*\|^2=\|T^*T\|$, the study of these three operators is equivalent, even if the goal is to obtain sharp inequalities and determine optimal constants.
So we focus on the operator $T^*T$. 
Boundedness of $T^*T(g)$ is only ensured if 
the Fourier transform $\widehat{\mu}(x)$
exhibits some sort of decay as $|x|\to\infty$. 
This in turn is a consequence of the principle of stationary phase, see \cite{St, Wo},
since the nonvanishing curvature of $\mM$ translates into a nondegenerate Hessian for the phase function of the oscillatory integral given by \eqref{gsigmahat} with $g\equiv 1$. 
This is the starting point for the original argument of Tomas \cite{To}, which was then extended to the endpoint $\frac{2d+2}{d-1}$ by embedding $T^*T$ into an analytic family of operators and invoking Stein's complex interpolation theorem.
It is hard not to notice the parallel between the operator $T^*T$ and the averaging operator~$f\mapsto f\ast\mu,$
whose $L^p$ improving properties can be established via the same proof on the Fourier side, see for instance \cite[pp. 370-371]{St}.

A second method to prove restriction estimates goes back to the work of Ginibre and Velo \cite{GV}. 
It consists of introducing a time parameter and treating the extension operator 
as an evolution operator.
Two key ingredients for this approach are the Hausdorff--Young inequality and fractional integration in the form of the Hardy--Littlewood--Sobolev inequality.\footnote{Interestingly, the full restriction conjecture on $\R^2$ can be proved with a combination of Hausdorff--Young and Hardy--Littlewood--Sobolev, see for instance \cite[pp. 412-414]{St}.}
These methods are more amenable to the needs of the partial differential equations community, 
and for instance allow to treat the case of mixed norm spaces.

In the special cases when the dual exponent $p'$ is an even integer, one can devise yet another proof which comes from the world of bilinear estimates, see for instance \cite{Bourg1, FK, KM1}. One simply rewrites the left-hand side of inequality \eqref{extensionineq} as an $L^2$ norm, and appeals to Plancherel in order to reduce the problem to a multilinear convolution estimate. 
For instance, if $d=3$, then $p'=\frac{2\cdot 3+2}{3-1}=4$, and 
$$\|\widehat{g\mu}\|^2_{L^4(\R^3)}
=\|\widehat{g\mu}\;\widehat{g\mu}\|_{L^2(\R^3)}
=\|g\mu\ast g\mu\|_{L^2(\R^3)}.$$
Similarly, if $d=2$, then $p'=\frac{2\cdot 2+2}{2-1}=6$, and 
$$\|\widehat{g\mu}\|^3_{L^6(\R^2)}=\|g\mu\ast g\mu\ast g\mu\|_{L^2(\R^2)}.$$
In the bilinear case, the pointwise inequality
$$|g\mu\ast g\mu|\leq |g|\mu\ast |g|\mu$$
then reveals that one can restrict attention to nonnegative functions. This observation can greatly simplify matters, as it reduces an oscillatory problem to a question of geometric integration over a specific manifold.
Furthermore, an application of the Cauchy--Schwarz inequality with respect to an appropriate measure implies the pointwise inequality
$$|g\mu\ast g\mu|^2\leq (|g|^2\mu\ast |g|^2\mu)  (\mu\ast\mu).$$
A good understanding of the convolution measure $\mu\ast\mu$ 
becomes a priority.
Given integrable functions $g, h\in L^1(\mM,\mu)$, the convolution $g\mu\ast h\mu$ is a finite measure defined on the Borel subsets $E\subset\R^d$ by
$$(g\mu\ast h\mu)(E)=\int_{\mM\times \mM} \chi_E(\eta+\zeta) g(\eta) h(\zeta) \d\mu_\eta \d\mu_\zeta.$$ 
It is clear that this measure is supported on the Minkowski sum $\mM+\mM$.
In most situations of interest when some degree of curvature is present, one can check that $g\mu\ast h\mu$ is absolutely continuous with respect to Lebesgue measure on $\R^{d}$.
In such cases, the measure $g\mu\ast h\mu$ can be identified with its Radon--Nikodym derivative with respect to Lebesgue measure, and for almost every $\xi\in\R^d$ we have that
\begin{equation}\label{deltacalc}
(g\mu\ast h\mu)(\xi)=\int_{\mM\times \mM}\ddirac{\xi-\eta-\zeta} g(\eta) h(\zeta) \d\mu_\eta \d\mu_\zeta.
\end{equation} 
Here $\ddirac{\cdot}$ denotes the $d$-dimensional Dirac delta distribution. 
As we shall exemplify in the course of the paper, expression \eqref{deltacalc} turns out to be very useful for computational purposes.\\

\noindent {\bf Overview.}
As we hope to have made apparent already, this note is meant as a short survey of a restricted part of the topic of sharp Strichartz and Fourier extension inequalities.
In \S \ref{sec:noncompact} we deal with noncompact surfaces, and discuss the cases of the paraboloid and the cone, where a full characterization of extremizers is known, and the cases of the hyperboloid and a quartic perturbation of the paraboloid, where extremizers fail to exist.
Most of the material in this section is contained in the works \cite{C, F2, OSQ, Q, Q2}. 
In \S \ref{sec:compact}  we discuss the case of spheres. A full characterization of extremizers at the endpoint is only known in the case of $\mathbb{S}^2$. We mention some recent partial progress in the case of the circle~$\mathbb{S}^1$, and observe how the methods in principle allow to refine some related inequalities. 
In particular, we obtain a new sharp extension inequality on $\mathbb{S}^1$ in the mixed norm space~$L_{\text{rad}}^6 L_{\text{ang}}^2(\R^2)$.
Most of the material in \S  \ref{sec:compact} is contained in the works \cite{COS, CFOST, Co, F, OST}. 
We leave some final remarks to \S  \ref{sec:unify}, where we hint at a possible unifying picture for the results that have been discussed.
Finally, we include an Appendix with a brief introduction to integration on manifolds using delta calculus.\\

\noindent {\bf Remarks and further references.}
The style of this note is admittedly informal. 
In particular, some objects will not be rigorously defined, and most results will not be precisely formulated.
None of the material is new, with the exception of the results in \S \ref{sec:MixedNorm} and a few observations that we have not been able to find in the literature.
The subject is becoming more popular, as shown by the increasing number of works that appeared in the last five years. 
We have attempted to give a rather complete set of references, which includes several interesting works
 \cite{BBI, BBJP, BJ, BR, BS, B, HS, J, OS2, OR, OR2, Se} that will not be discussed here.
Given its young age, there are plenty of open problems in the area.
Our contribution is to provide some more.

\section{Noncompact surfaces}\label{sec:noncompact}

\subsection{Paraboloids}\label{sec:paraboloids}
Given $d\geq 1$, let us consider the $d$-dimensional paraboloid
$$\mathbb{P}^d=\{(\xi,\tau)\in\R^{d+1}: \tau={|\xi|^2}\},$$
equipped with projection measure
$$\mu_d(\xi,\tau)=\ddirac{\tau-{|\xi|^2}}\d\xi \d\tau.$$
The validity of an $L^2(\mu_d)\to L^{2+\frac4d}$ extension estimate follows from Strichartz inequality~\eqref{Strichartzineq} for the Schr\"odinger equation as discussed before. 
Extremizers for this inequality are known to exist in all dimensions \cite{Sh}, and to be Gaussians in low dimensions \cite{BBCH, F2, HZ}.
Extremizers are conjectured to be Gaussians in all dimensions \cite{HZ}, see also \cite{CQ}.
Let us specialize to the case $d=2$ and follow mostly \cite{F2}.
In view of identity \eqref{deltacalc}, which itself is a consequence of formula \eqref{eq:dddef} from the Appendix, the convolution of projection measure on the two-dimensional paraboloid $\mathbb{P}^2$ is given by
$$(\mu_2\ast\mu_2) (\xi,\tau)
=\int_{\R^2\times\R^2} 
\ddirac{\xi-\eta-\zeta \\ \tau-|\eta|^2-|\zeta|^2} 
\d \eta \d \zeta
=\int_{\R^2}\ddirac{\tau-|\eta|^2-|\xi-\eta|^2}\d \eta.$$
Changing variables and computing in  polar coordinates according to \eqref{changevars1}, we have that
$$
(\mu_2\ast\mu_2) (\xi,\tau)
=\int_{\R^2}\ddirac{\tau-\frac{|\xi|^2}2-2|\eta|^2}\d \eta
=\frac{\pi}2\chi\Big(\tau\geq \frac{|\xi|^2}2\Big).$$
We arrive at the crucial observation that the convolution measure $\mu_2\ast \mu_2$ defines a function which is not only uniformly bounded, but also constant in the interior of its support $\{2\tau\geq |\xi|^2\}$. See \cite[Lemma 3.2]{F2} for an alternative proof that uses the invariance of $\mu_2\ast \mu_2$ under Galilean transformations and parabolic dilations. 
A successive application of Cauchy--Schwarz and H\"older's inequality finishes the argument. Indeed, the pointwise bound
\begin{equation}\label{pointwise}
|(f\mu_2\ast f\mu_2)(\xi,\tau)|^2
\leq 
(\mu_2\ast\mu_2)(\xi,\tau)
(|f|^2\mu_2\ast |f|^2\mu_2)(\xi,\tau)
\end{equation}
follows from an application of the Cauchy--Schwarz inequality with respect to the measure 
$$\ddirac{\xi-\eta-\zeta  \\ \tau-|\eta|^2-|\zeta|^2} \d \eta \d \zeta.$$
Integrating inequality \eqref{pointwise} over $\R^{2+1}$, an application of H\"older's inequality then reveals 
\begin{equation}\label{HolderafterCS}
\|f\mu_2\ast f\mu_2\|_{L^2(\R^3)}^2\leq \|\mu_2\ast\mu_2\|_{L^\infty(\R^3)} \|f\|_{L^2(\R^2)}^4.
\end{equation}
It is possible to turn both inequalities simultaneously into an equality. The conditions for equality in \eqref{pointwise} translate into a functional equation
\begin{equation}\label{functionaleqGauss}
f(\eta)f(\zeta)=F(\eta+\zeta,|\eta|^2+|\zeta|^2)
\end{equation}
which should hold for some complex-valued function $F$ defined on the support of the convolution $\mu_2\ast\mu_2$, and almost every point $(\eta,\zeta)$.  
An example of a solution to \eqref{functionaleqGauss} is given by the Gaussian function $f(\eta)=\exp(-|\eta|^2)$ and the corresponding $F(\xi,\tau)=\exp(-\tau)$.
All other solutions are obtained from this one by applying a symmetry of the Schr\"odinger equation, see \cite[Proposition 7.15]{F2}.
That they turn inequality \eqref{HolderafterCS} into an equality follows from the fact that the convolution $\mu_2\ast\mu_2$ is constant inside its support.

The one-dimensional case $L^2(\mu_1)\to L^6$  admits a similar treatment. 
The threefold convolution $\mu_1\ast\mu_1\ast\mu_1$ is given by a constant function in the interior of its support~$\{3\tau\geq \xi^2\}$, and the corresponding functional equation can be solved by similar methods. 
Gaussians are again seen to be the unique extremizers.

Alternative approaches are available:
Hundertmark--Zharnitski \cite{HZ} based their analysis on a novel representation of the Strichartz integral  $\|\widehat{f\sigma}\|_{L^4}^4$, and Bennett et al. \cite{BBCH} identified a monotonicity property of such integrals under a certain quadratic heat-flow.

\subsection{Cones}
Given $d\geq 2$, consider the (one-sheeted) cone
$$\Gamma^d=\{(\xi,\tau)\in\R^{d+1}: \tau=|\xi|>0\},$$
equipped with its Lorentz invariant measure
$$\nu_d(\xi,\tau)
=\ddirac{\tau-|\xi|} \frac{\d\xi \d\tau}{|\xi|}
=2\ddirac{\tau^2-|\xi|^2}\;{\chi(\tau>0)}{\d\xi \d\tau}.$$
The second identity is a consequence of formula \eqref{scalarmult} from the Appendix.
The validity of an $L^2(\nu_d)\to L^{2+\frac4{d-1}}$ extension estimate follows from Strichartz inequality \eqref{StrichartzineqW} for the wave equation as discussed before. 
Extremizers for the cone are known 
to exist in all dimensions \cite{Ra}, and 
to be exponentials in low dimensions \cite{C, F2}.
Let us specialize to the case $d=3$.
The convolution of the Lorentz invariant measure on the three-dimensional cone $\Gamma^3$ is given by
$$(\nu_3\ast\nu_3)(\xi,\tau)
=\int_{\R^3}\frac{\ddirac{\tau-|\eta|-|\xi-\eta|}}{|\eta||\xi-\eta|} \d \eta.$$
Given $\xi\in\R^3$, we write a generic vector $\eta\in\R^3$ in spherical coordinates, so that
$\d \eta=\rho^2\sin\theta \d\rho \d\theta \d\varphi,$
where $\rho=|\eta|\geq 0$, $\theta\in[0,\pi]$ is the angle between $\xi$ and $\eta$, and $\varphi\in[0,2\pi]$ is an angular variable. 
Setting $\sigma=|\xi-\eta|$,
the Jacobian of the change of variables $\eta\rightsquigarrow(\rho,\sigma,\varphi)$ into bipolar coordinates, see Figure \ref{fig:bipolar} below, is given by
$$\d \eta=\frac{\rho\sigma}{|\xi|} \d\rho \d\sigma \d\varphi.$$

\begin{figure}[htbp]
  \centering
  
  \begin{tikzpicture} [scale = 1]
    \clip (-2.01, -2.51) rectangle (5.01, 2.51);
 
    \foreach \r in {0.5, 1, ..., 5} {
      \draw[thin, blue!40] (0, 0) circle [radius = \r];
      \draw[thin, green!60] (3, 0) circle [radius = \r];
    }

    \draw[thin, ->] (-2, 0) -- (5, 0);
    \draw[thin, ->] (0, -2.5) -- (0, 2.5);

    \draw[very thick, red] (1.5, 0) circle [x radius = 2.5, y radius = 2];

    \draw[thick, ->] (0,0) -- (2.333, 1.886) node[midway, above] {$\rho$};
    \draw[thick, ->] (3,0) -- (2.333, 1.886) node[midway, right] {$\sigma$};

    \fill (0, 0) circle [radius = 0.05] node[below left] {$0$};
    \fill (3, 0) circle [radius = 0.05] node[below left] {$\xi$};
    \fill (2.333, 1.886) circle [radius = 0.05] node[above] {$\eta$};

  \end{tikzpicture}

  \caption{Bipolar coordinates}
  \label{fig:bipolar}
\end{figure}
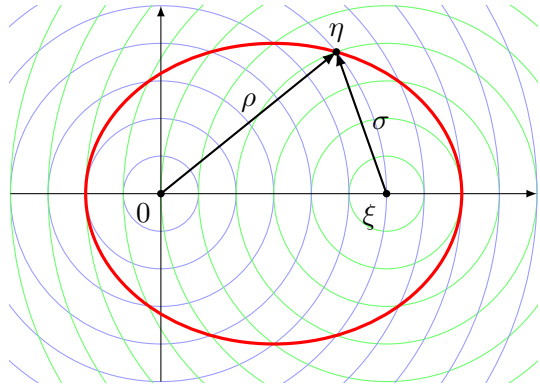

\noindent Letting $a=\rho-\sigma$ and $b=\rho+\sigma$, we invoke the change of variables formula \eqref{changevars1} to further compute
\begin{multline*}
(\nu_3\ast\nu_3)(\xi,\tau)
=2\pi\int_{\substack{|\rho-\sigma|\leq |\xi| \\ \rho+\sigma\geq |\xi|}} \frac{\ddirac{\tau-\rho-\sigma}}{\rho\sigma} \frac{\rho\sigma}{|\xi|} \d\rho \d\sigma=\\
=\frac{\pi}{|\xi|} \int_{\substack{|a|\leq |\xi| \\ b\geq |\xi|}} \ddirac{\tau-b} \d a \d b=2\pi\chi(\tau\geq |\xi|).
\end{multline*}
This again defines a constant function inside its support, and a combination of Cauchy--Schwarz and H\"older as before establishes the sharp inequality. The characterization of extremizers follows from the analysis of the functional equation
\begin{equation}\label{functionaleqExp}
f(\eta)f(\zeta)=F(\eta+\zeta,|\eta|+|\zeta|),
\end{equation}
 which yields as a particular solution the exponential function $f(\eta)=\exp(-|\eta|)$ and the corresponding $F(\xi,\tau)=\exp(-\tau)$.
All other solutions are obtained from this one by applying a symmetry of the wave equation, see \cite[Proposition 7.23]{F2}, and 
the lower dimension case~$L^2(\nu_2)\to L^6$ admits a similar treatment. 

\subsection{Hyperboloids}
We now switch to the (one-sheeted) hyperboloid
$$\mathbb{H}^d=\{(\xi,\tau)\in\R^{d+1}: \tau=\sqrt{1+|\xi|^2}\},$$
equipped with the Lorentz invariant measure
$$\lambda_d(\xi,\tau)=\ddirac{\tau-\sqrt{1+|\xi|^2}}\frac{ \d\xi \d\tau}{\sqrt{1+|\xi|^2}}
=2\ddirac{\tau^2-|\xi|^2-1}\;{\chi(\tau>0)}{\d\xi \d\tau}.$$
As established in \cite{Str}, an $L^2(\lambda_d)\to L^{p'}$ extension estimate holds provided
\begin{align}
2+\frac 4d \leq p'\leq 2+\frac4{d-1},\text{ if }d>1,\label{HypNum}\\
6\leq p'<\infty, \text{ if }d=1.\notag
\end{align}
Note that the lower and upper bounds in the exponent range \eqref{HypNum} correspond to the cases of the paraboloid and the cone, respectively.
We focus on the case $d=2$ and $p'=4$, and take advantage of Lorentz symmetries to compute the convolution $\lambda_2\ast\lambda_2$. Along the vertical axis of the hyperboloid $\mathbb{H}^2$,
$$(\lambda_2\ast\lambda_2)(0,\tau)
=\int_{\R^2}\ddirac{\tau-2\sqrt{1+|\eta|^2}} \frac{\d \eta}{1+|\eta|^2}
=\frac{2\pi}{\tau}\chi(\tau\geq 2).$$
Lorentz invariance forces the convolution $\lambda_2\ast\lambda_2$ to be constant along the level sets of the function $\tau^2-|\xi|^2$. 
As a consequence,
\begin{equation}\label{hypconv}
(\lambda_2\ast\lambda_2)(\xi,\tau)=\frac{2\pi}{\sqrt{\tau^2-|\xi|^2}}\chi(\tau\geq \sqrt{4+|\xi|^2}).
\end{equation}
Contrary to the previous cases, this no longer defines a constant function inside its support. 
Since it is uniformly bounded (by $\pi$), the argument can still be salvaged to yield a sharp extension inequality. Extremizers for this inequality, however, do not exist. We shall observe a similar phenomenon in the case of perturbed paraboloids, considered in the next subsection, and postpone a more detailed discussion until then.
\subsection{Perturbations}
Let us start with a brief discussion of  a specific instance of a comparison principle from \cite{OSQ} that proved useful in establishing sharp inequalities for perturbed paraboloids.
Let $(\mathbb{P}^2,\mu_2)$ denote the two-dimensional paraboloid considered in \S \ref{sec:paraboloids}, and let $\widetilde{\mu}_2$ denote the projection measures on the surface
$$\{(\xi,\tau)\in\R^{2+1}: \tau=|\xi|^2+|\xi|^4\}.$$
Then the pointwise inequality 
\begin{equation}\label{comparison}
(\widetilde{\mu}_2\ast\widetilde{\mu}_2)\Big(\xi,\frac{|\xi|^2}2+\frac{|\xi|^4}8+\tau\Big)
\leq (\mu_2\ast\mu_2)\Big(\xi,\frac{|\xi|^2}2+\tau\Big)
\end{equation}
holds for every $\tau>0$ and $\xi\in\R^d$,
and is strict at almost every point of the support of the measure $\widetilde{\mu}_2\ast\widetilde{\mu}_2$. 
The feature of the function $\xi\mapsto|\xi|^4$ that makes this possible is {\it convexity}. Any nonnegative, continuously differentiable, strictly convex function would do, see \cite[Theorem 1.3]{OSQ} for a precise version of this comparison principle which holds in all dimensions.  
A sharp $L^2(\widetilde{\mu}_2)\to L^4$ extension inequality can be obtained by concatenating Cauchy--Schwarz and H\"older as before, and extremizers do not exist because of the strict inequality in \eqref{comparison}.
Heuristically, extremizing sequences are forced to place their mass in arbitrarily small neighborhoods of the region where the convolution $\widetilde{\mu}_2\ast\widetilde{\mu}_2$ attains its global maximum, for otherwise they would not come close to attaining the sharp constant.
The analysis of the cases of equality in \eqref{comparison} reveals that this region has zero Lebesgue measure, and this forces any extremizing sequence to concentrate.

\section{Compact surfaces}\label{sec:compact}
\subsection{Spheres}
Consider the endpoint  Tomas--Stein extension inequality on the sphere 
\begin{equation}\label{sphererestriction}
\Big(\int_{\R^{d}} |\widehat{f\sigma}(x)|^{p'} \d x\Big)^{\frac1{p'}}\leq C \|f\|_{L^2(\mathbb{S}^{d-1},\sigma)},
\end{equation}
where  $\sigma=\sigma_{d-1}$ denotes surface measure on $\mathbb{S}^{d-1}$ and $p'=\frac{2d+2}{d-1}$.
Extremizers for inequality \eqref{sphererestriction} were first shown to exist when $d=3$ in \cite{CS}. 
The precise form of nonnegative extremizers was later determined in \cite{F}, and they turn out to be constant functions.
See also \cite{FLS} for a conditional existence result in higher dimensions.
It seems natural to conjecture that extremizers should be constants in all dimensions.
Spheres are antipodally symmetric compact manifolds, and this brings in some additional difficulties which can already be observed at the level of convolution measures. Indeed, formula
\eqref{scalarmult} from the Appendix implies
$$\sigma_{d-1}(\xi)=\ddirac{1-|\xi|}\d\xi=2\ddirac{1-|\xi|^2}\d\xi.$$
Invoking \eqref{prodrule}, and then \eqref{scalarmult} once again, we have
$$(\sigma_{d-1}\ast\sigma_{d-1})(\xi)
=2\int_{\mathbb{S}^{d-1}}\ddirac{1-|\xi-\omega|^2} \d\sigma_\omega
=\frac{2}{|\xi|}\int_{\mathbb{S}^{d-1}}\ddirac{2\frac{\xi}{|\xi|}\cdot\omega-|\xi|}\d\sigma_\omega.
$$
Computing in polar coordinates according to \eqref{changevars1}, one concludes
\begin{equation}\label{2foldsphconv}
(\sigma_{d-1}\ast\sigma_{d-1})(\xi)
=\frac{V_{d-2}}{|\xi|}\Big(1-\frac{|\xi|^2}4\Big)_+^{\frac{d-3}2},
\end{equation}
where $V_{d-2}$ denotes the surface measure of $\mathbb{S}^{d-2}$. 
When $d=3$, 
and consequently $p'=4$, the inequality in question is equivalent to a 4-linear estimate.
Moreover, 
the last factor in \eqref{2foldsphconv} simplifies and one is left with
\begin{equation}\label{sphconv}
(\sigma_2\ast\sigma_2)(\xi)=\frac{2\pi}{|\xi|}\chi(|\xi|\leq2).
\end{equation}
Expression \eqref{sphconv} blows up at the origin $\xi=0$, which prevents a straightforward adaptation of the methods from the previous section. In particular, the interaction between antipodal points causes difficulties that prevented the local analysis from \cite{CS} to identify the global extremizers of the problem. The work \cite{F} resolves this issue in a simple manner, using the following geometric feature of the sphere: If the sum of three unit vectors $\omega_1,\omega_2,\omega_3\in\mathbb{S}^d$ is again a unit vector, then necessarily
\begin{equation}\label{magic}
|\omega_1+\omega_2|^2+|\omega_2+\omega_3|^2+|\omega_3+\omega_1|^2=4.
\end{equation}
To see why this is true, one simply squares the assumption $|\omega_1+\omega_2+\omega_3|=1$ and expands the left-hand side of \eqref{magic}. 
An application of the Cauchy--Schwarz inequality together with identity \eqref{magic} reduces the analysis to antipodally symmetric functions, and at the same time neutralizes the singularity of the convolution measure at the origin. This reduces the $4$-linear problem on $f$ to a bilinear problem on its square $f^2$. More precisely, one is left with establishing a monotonicity property for  the quadratic form 
$$H(g)=\int_{(\mathbb{S}^2)^2} \overline{g(\omega)} g(\nu) |\omega-\nu| \d\sigma_\omega \d\sigma_\nu,$$
where $g=f^2$ is now assumed to be merely integrable.
This in turn is accomplished via spectral analysis. If $c$ denotes the mean value of $g$ over the sphere and ${\bf 1}$ denotes the constant function equal to 1, one wants to show that $H(g)\leq H(c {\bf 1}).$
The crucial observation is that the quadratic form $H$ is diagonal in a suitable basis. 
In fact, expanding~$g=\sum_{k\geq 0} Y_k$ in spherical harmonics, we have
\begin{equation}\label{estimateHg}
H(g)
=2\pi\sum_{k\geq 0} \Lambda_k \|Y_k\|_{L^2(\mathbb{S}^2)}^2
\leq 2\pi \Lambda_0 \|Y_0\|_{L^2(\mathbb{S}^2)}^2=H(c {\bf 1}),
\end{equation}
where the eigenvalues $\Lambda_k$ can be computed via the Funk--Hecke formula \cite[p.~247]{EMOT}. 
It turns out that $\Lambda_k<0$ when $k\geq 1$, we refer the reader to \cite{F} for the full details. This approach was extended in \cite{COS} to establish sharp $L^2(\sigma_{d-1})\to L^4$ extension estimates on~$\mathbb{S}^{d-1}$ for~\mbox{$d=4,5,6,7$}. 
Table \ref{tab:hresult}
indicates the signs of the corresponding coefficients $\Lambda_k=\Lambda_k (\mathbb{S}^{d-1})$.

\begin{table}[htbp] 
\centering \begin{tabular}{c cccccccc} 
$$$$$$$$\\
\hline 
&$\Lambda_0$&$\Lambda_1$&$\Lambda_2$&$\Lambda_3$&$\Lambda_4$&$\Lambda_5$&$\Lambda_6$&$\ldots$\\
\hline
$d=3$ & + & $-$ & $-$& $-$& $-$& $-$& $-$&$\ldots$\\	
$d=4$&+&0& $-$&0&$-$&0&$-$&$\ldots$\\ 
$d=5,6,7$	& + & + & $-$& $-$& $-$& $-$& $-$&$\ldots$\\
$d\geq 8$& + & + & {{+}} & $\ast$& $\ast$& $\ast$&$\ast$& $\ldots$\\[1ex] 
\hline \end{tabular}
\caption{Signs of the coefficients $\Lambda_k(\mathbb{S}^{d-1})$} 
 \label{tab:hresult} 
\end{table}

 Note that the sum in \eqref{estimateHg} ranges over even values of $k$ only since the function $f$, and therefore $g=f^2$, is  assumed to be antipodally symmetric.
In particular, the sign of the coefficient~$\Lambda_1$ is of no importance to the analysis.
However,  one starts to observe that $\Lambda_2 (\mathbb{S}^{d-1})>0$ if~$d\geq 8$, and this is the reason for the failure of this method of proof.

\noindent Threefold convolution measures can also be computed via delta calculus, at the expense of possibly more complicated expressions. For instance, the convolution $\sigma_2\ast\sigma_2\ast\sigma_2$ 
is a radial function supported on the ball of radius 3 centered at the origin, given by the expression
\begin{displaymath}
(\sigma_2\ast\sigma_2\ast\sigma_2)(\xi) = \left\{ \begin{array}{ll}
8\pi^2, & \textrm{if $0\leq |\xi|\leq 1$,}\\
4\pi^2\Big(-1+\frac{3}{|\xi|}\Big), & \textrm{if $1\leq |\xi|\leq 3$.}
\end{array} \right.
\end{displaymath}
Notice that this defines a bounded, continuous function which is constant inside the unit ball, and decreases to zero on the annulus $\{1\leq |\xi| \leq 3\}$.
The lowest dimensional endpoint case $(d,p)=(2,6)$ holds some hidden surprises. 
First of all, the estimate translates into an inequality involving a 6-linear form.
The convolution $\sigma_1\ast\sigma_1\ast\sigma_1$ defines a radial function given by a complicated integral expression,
see \cite[Lemma 8]{CFOST}, which is better illustrated in Figure \ref{fig:tripleconvS1plot} below.

\begin{figure}[htbp]
  \centering
  \includegraphics[height=5cm]{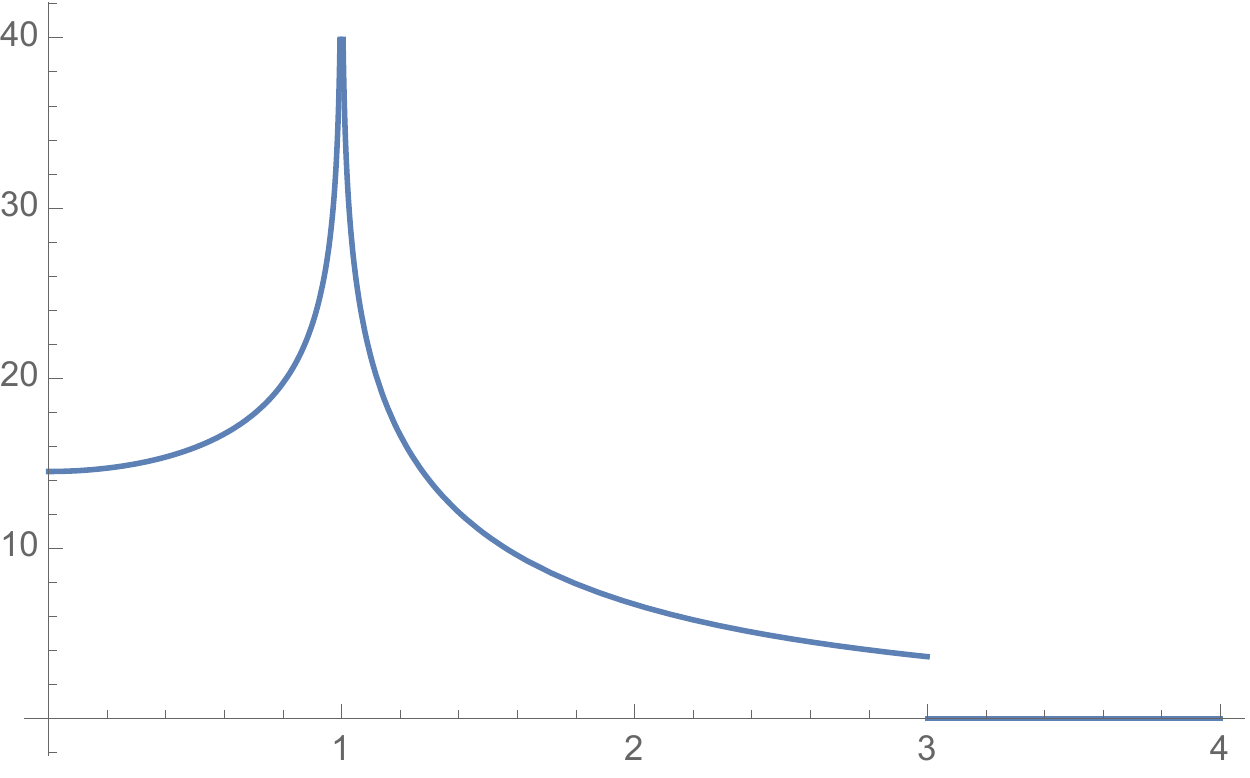} 
    \caption{Plot of the function $r\mapsto (\sigma_1 \ast \sigma_1 \ast \sigma_1) (r)$.}
\label{fig:tripleconvS1plot}
\end{figure}
\noindent A difficulty is that the convolution $\sigma_1\ast\sigma_1\ast\sigma_1$  now blows up along a whole  circle, and not just at one point. 
One would still like to reduce matters to a two-step analysis, and a possible program is as follows.
One first reduces the 6-linear analysis to a trilinear analysis on squares, and then performs a spectral analysis of the trilinear form
 \begin{equation}\label{defT}
T(h_1, h_2, h_3) 
= \int_{(\mathbb{S}^1)^6} 
h_1(\omega_1) h_2(\omega_2) h_3(\omega_3) \big(|\omega_4+\omega_5+\omega_6|^2-1\big)\,\d\Sigma_{\vec{\omega}},
\end{equation}
where the measure $\Sigma$ is given by
\begin{equation}\label{DefSigma}
\d\Sigma_{\vec{\omega}}=\ddirac{\omega_1+\omega_2+\omega_3+\omega_4+\omega_5+\omega_6}\,\d\sigma_{\omega_1}\,\d\sigma_{\omega_2}\,\d\sigma_{\omega_3}\,\d\sigma_{\omega_4}\,\d\sigma_{\omega_5}\,\d\sigma_{\omega_6}.
\end{equation}
This is later specialized to the case $h_1=h_2=h_3=f^2$. 
Let us formulate these steps in a more precise manner:

\begin{step}\label{task1}
Let $f\in L^2(\mathbb{S}^1)$ be nonnegative and antipodally symmetric. Then: 
\begin{multline*}\label{CSConj}
\int_{(\mathbb{S}^1)^6} f(\omega_1)f(\omega_2)f(\omega_3)f(\omega_4)f(\omega_5)f(\omega_6)
\big(|\omega_4+\omega_5+\omega_6|^2-1\big)\,\d\Sigma_{\vec{\omega}} \\
\leq \int_{(\mathbb{S}^1)^6} f(\omega_1)^2f(\omega_2)^2f(\omega_3)^2\big(|\omega_4+\omega_5+\omega_6|^2-1\big)\,\d\Sigma_{\vec{\omega}}.
\end{multline*}
\end{step}

\begin{step}\label{task2}
Let $h \in L^1(\mathbb{S}^1)$ be a nonnegative and antipodally symmetric function. 
Let $c$ denote the mean value of $h$ over $\mathbb{S}^1$. 
Then 
$T(h,h,h) \leq c^3 T({\bf 1},{\bf 1},{\bf 1})$,
with equality if and only if $h$ is constant.
\end{step}

\noindent Step \ref{task1} is an open problem, posed as a conjecture in \cite{CFOST}. 
Step \ref{task2} is the subject of papers~\cite{CFOST, OST}. 
In short, one decomposes $h=c+g$ where $g$ has mean zero, and analyzes each of the summands in
$$T(h,h,h)=T(c,c,c)+3T(c,c,g)+3T(c,g,g)+T(g,g,g)$$
separately. 
The linear term in $g$ vanishes for symmetry considerations, the bilinear term is nonpositive and the trilinear term can be controlled in absolute value by the bilinear term. The proof of these statements relies on expanding the integrand in \eqref{defT} in Fourier basis, and appealing to delicate estimates for integrals of sixfold products of Bessel functions. We refer the reader to the original papers for details, and proceed to discuss a related inequality which can be put in sharp form by invoking more elementary estimates for integrals of Bessel functions.

\subsection{Estimates in mixed norm spaces}\label{sec:MixedNorm}
The connection between Bessel functions and  spherical harmonics is easily seen via 
 the formula

\begin{equation}\label{second}
\widehat{Y_k\sigma}(x)= i^k J_{\frac{d}2-1+k}(|x|) |x|^{1-\frac{d}2} Y_k\Big(\frac{x}{|x|}\Big),\;\;\;(x\in\R^d)
\end{equation}
which follows from an application of the Funk--Hecke formula together with Rodrigues formula for Gegenbauer polynomials, see \cite{SW, W}.
Spherical harmonic expansions played an important role in the recent work of C\'ordoba \cite{Co} on certain Fourier extension inequalities in mixed norm spaces. 
The following inequality, which had already appeared in the Ph.D. thesis of Vega \cite{V}, was reproved in \cite{Co} using different methods:
For $d\geq 2$ and $q>\frac{2d}{d-1}$, there exists $C_{d,q}<\infty$ such that
\begin{equation}\label{Vega}
\|\widehat{f\sigma}\|_{L^q_{{\text{rad}}} L^2_{{\text{ang}}}(\R^d)}
\leq C_{d,q} \|f\|_{L^2(\mathbb{S}^{d-1})}.
\end{equation}
 Here the norm in $L^q_{\text{rad}} L^2_{\text{ang}}(\R^d)$ is given by the integral
$$\Big(\int_0^\infty\Big(\int_{\mathbb{S}^{d-1}} |\widehat{f\sigma}(r\omega)|^2 \d\sigma_\omega\Big)^{\frac q2} r^{d-1} \d r\Big)^{\frac 1q}.$$
Note that inequality \eqref{Vega} follows in a simple way from H\"older's inequality for those exponents $q$ for which the (adjoint) Tomas--Stein inequality is known to hold. 
We now indicate a possible path to obtain the sharp form of a number of instances of inequality \eqref{Vega}.
Given~$f\in L^2(\mathbb{S}^{d-1})$, we expand it in normalized spherical harmonics
$$f=\sum_{k\geq 0} a_k Y_k,$$
with each $\|Y_k\|_{L^2}=1$.
Appealing to formula \eqref{second} and to orthogonality of the $\{Y_k\}$, we \mbox{see that}
$$\int_{\mathbb{S}^{d-1}} |\widehat{f\sigma}(r\omega)|^2 \d\sigma_\omega
=\sum_{k\geq 0} |a_k|^2 |J_{\frac{d}{2}-1+k}(r)|^2 r^{2-d}.$$
The $q$-th power of the left-hand side of inequality \eqref{Vega} can thus be rewritten as
\begin{equation}\label{LHS}
  \int_0^\infty \Big(\sum_{k\geq 0} |a_k|^2 |J_{\frac{d}{2}-1+k}(r)|^2\Big)^{\frac q2} r^{(1-\frac d2)q} r^{d-1} \d r,
\end{equation}
which is the starting point for the analysis in \cite{Co}. Whenever the exponent $q$ is an even integer, we can take an alternative route and in principle obtain a sharp inequality. Let us illustrate this in the  case $(d,q)=(2,6)$, which corresponds to a weaker form of the~\mbox{$L^2(\sigma_1)\to L^6$} adjoint Tomas--Stein inequality discussed in the previous subsection. 
In this case, the integral \eqref{LHS} can be rewritten as
$$\sum_{k,\ell,m\geq 0} |a_k|^2 |a_\ell|^2 |a_m|^2 I(k,\ell,m),$$
where the integrals $I(k,\ell,m)$ are defined as
$$I(k,\ell,m)=\int_0^\infty  J^2_{k}(r)J^2_{\ell}(r) J^2_{m}(r) r \d r.$$
These integrals obey the following monotonicity property:
\begin{proposition}\label{task3}
Let $k,\ell,m$ be nonnegative integers. Then:
$$I(k,\ell,m)\leq I(0,0,0),$$
and equality holds if and only if $k=\ell=m=0$.
\end{proposition}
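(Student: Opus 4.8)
The plan is to turn the Bessel integral $I(k,\ell,m)$ into the squared $L^2$-norm of a triple product of spherical extensions, and then to extract the bound from the positivity of a convolution measure. First I would invoke the case $d=2$ of formula~\eqref{second}, namely $\widehat{Y_k\sigma}(x)=i^kJ_k(|x|)Y_k(x/|x|)$, using the normalized harmonics $Y_k(\omega)=(2\pi)^{-1/2}e^{ik\vartheta}$ with $\omega=(\cos\vartheta,\sin\vartheta)$. Since $|Y_k|^2\equiv(2\pi)^{-1}$ is constant, the quantity $|\widehat{Y_k\sigma}(x)|^2=(2\pi)^{-1}J_k^2(|x|)$ is radial, and passing to polar coordinates gives
\[
\big\|\widehat{Y_k\sigma}\,\widehat{Y_\ell\sigma}\,\widehat{Y_m\sigma}\big\|_{L^2(\R^2)}^2
=\int_{\R^2}|\widehat{Y_k\sigma}|^2\,|\widehat{Y_\ell\sigma}|^2\,|\widehat{Y_m\sigma}|^2\d x
=\frac{1}{(2\pi)^2}\,I(k,\ell,m).
\]
Thus, up to a fixed positive factor, $I(k,\ell,m)$ is the squared $L^2$-norm of the product of three extension functions.

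Next I would apply Plancherel together with the fact that a product of Fourier transforms of measures is the transform of their convolution, rewriting $\widehat{Y_k\sigma}\,\widehat{Y_\ell\sigma}\,\widehat{Y_m\sigma}$ as the transform of $(Y_k\sigma)\ast(Y_\ell\sigma)\ast(Y_m\sigma)$. Expanding the resulting $L^2$-norm and integrating out the spatial variable exactly as in the delta calculus of~\eqref{deltacalc} produces a sixfold integral over $(\mathbb{S}^1)^6$:
\[
I(k,\ell,m)=C\int_{(\mathbb{S}^1)^6}e^{i\Phi_{k,\ell,m}(\vec{\omega})}\,\d\mathcal{S}_{\vec{\omega}},
\qquad
\Phi_{k,\ell,m}=k(\vartheta_1-\vartheta_4)+\ell(\vartheta_2-\vartheta_5)+m(\vartheta_3-\vartheta_6),
\]
where $C>0$ is independent of the indices and $\d\mathcal{S}$ is the nonnegative measure obtained by restricting $\d\sigma_{\omega_1}\cdots\d\sigma_{\omega_6}$ to the set $\{\omega_1+\omega_2+\omega_3=\omega_4+\omega_5+\omega_6\}$. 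For $(k,\ell,m)=(0,0,0)$ the phase vanishes, so $I(0,0,0)=C\int\d\mathcal{S}$ is the total mass of $\d\mathcal{S}$.

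The inequality is then immediate. Because $I(k,\ell,m)=\int_0^\infty J_k^2J_\ell^2J_m^2\,r\,\d r$ is real and nonnegative, the integral $\int e^{i\Phi}\d\mathcal{S}$ is real and nonnegative; the reality can also be read off from the symmetry $(\omega_1,\omega_2,\omega_3)\leftrightarrow(\omega_4,\omega_5,\omega_6)$, which negates $\Phi$ while preserving $\d\mathcal{S}$. The triangle inequality for integrals now yields
\[
I(k,\ell,m)=C\Big|\int e^{i\Phi}\d\mathcal{S}\Big|\leq C\int|e^{i\Phi}|\,\d\mathcal{S}=C\int\d\mathcal{S}=I(0,0,0).
\]

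The main obstacle is the equality characterization, which is where the argument requires real work. Equality in the last step forces $e^{i\Phi}$ to equal a constant $\d\mathcal{S}$-a.e.; since the integral is positive, that constant is $1$, so $\Phi_{k,\ell,m}\in2\pi\Z$ on the support of $\d\mathcal{S}$. To rule this out unless $k=\ell=m=0$, I would first test against configurations in which $(\omega_4,\omega_5,\omega_6)$ is a permutation of $(\omega_1,\omega_2,\omega_3)$: these always satisfy the constraint and lie in the support for generic non-parallel $\omega_i$, and the transposition of the first two indices reduces $\Phi$ to $(k-\ell)(\vartheta_1-\vartheta_2)$, forcing $k=\ell$, and likewise $k=m$. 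With $k=\ell=m=:j$ the phase becomes $j$ times the difference of the two angle-sums, and I would then deform within the family of pairs of rotating equilateral triples (each summing to the origin, hence always satisfying the constraint and lying in the interior of the support), along which this difference varies continuously; this is incompatible with $\Phi\in2\pi\Z$ unless $j=0$. This isolates $k=\ell=m=0$ as the unique case of equality and completes the proof.
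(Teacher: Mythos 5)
Your proposal is correct and follows essentially the same route as the paper: both rewrite $I(k,\ell,m)$ via Plancherel as the integral of a unimodular character against the fixed nonnegative measure $\Sigma$ on the constraint set in $(\mathbb{S}^1)^6$, apply the triangle inequality, and characterize equality by showing the phase cannot be identically $1$ on the support unless $k=\ell=m=0$. The only difference is cosmetic (your pairing $\omega_1+\omega_2+\omega_3=\omega_4+\omega_5+\omega_6$ versus the paper's $\sum_{j=1}^{6}\omega_j=0$ with conjugated factors), plus the fact that you spell out the permutation/rotating-triple argument that the paper leaves as ``one easily checks.''
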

\noindent
 As a consequence, we  obtain a sharp form of this particular instance of inequality \eqref{Vega}:
\begin{multline*}
\|\widehat{f\sigma}\|^6_{L^6_{{\text{rad}}} L^2_{{\text{ang}}}(\R^2)}
=\sum_{k,\ell,m\geq 0} |a_k|^2 |a_\ell|^2 |a_m|^2 I(k,\ell,m) \\
\leq I(0,0,0)\sum_{k,\ell,m\geq 0} |a_k|^2 |a_\ell|^2 |a_m|^2 
=I(0,0,0) \|f\|_{L^2(\mathbb{S}^1)}^6,
\end{multline*}
with constants being the unique extremizers.

\begin{proof}[Proof of Proposition \ref{task3}]
Reasoning as in \cite[\S 3]{CFOST}, we see that
$$I(k,\ell,m)
=\int_{\R^2} \widehat{e_{k}\sigma} \overline{\widehat{e_{k}\sigma}} 
\widehat{e_{\ell}\sigma} \overline{\widehat{e_{\ell}\sigma}} 
\widehat{e_{m}\sigma} \overline{\widehat{e_{m}\sigma}} \d x
=\int_{(\mathbb{S}^1)^6} 
(\omega_1 \overline{\omega_2})^k (\omega_3 \overline{\omega_4})^\ell (\omega_5 \overline{\omega_6})^m 
\d\Sigma_{\vec{\omega}},$$
where the function $e_n:\mathbb{S}^1\to\Co$ is defined via $e_n(\omega)=\omega^n$ and the measure $\Sigma$ is given by~\eqref{DefSigma}.
The triangle inequality immediately implies
$$I(k,\ell,m)\leq I(0,0,0),$$
with equality if and only 
$$(\omega_1 \overline{\omega_2})^k (\omega_3 \overline{\omega_4})^\ell (\omega_5 \overline{\omega_6})^m=1,$$
for every $\omega_1,\ldots,\omega_6\in\mathbb{S}^1$ such that $\sum_{j=1}^6 \omega_j=0$.
One easily checks that this implies~\mbox{$k=\ell=m=0$}, and the proof is complete.
\end{proof}

\section{Towards a unifying picture}\label{sec:unify}

Table \ref{tab:extremizers} summarizes some of our discussion from the previous sections concerning the sharp form of a number of Fourier extension inequalities which can be recast as bilinear estimates for the appropriate convolution measures.

\begin{table}[htbp] 
\centering \begin{tabular}{c c c} 
$$$$$$$$\\
\hline 
Manifold&Fourier extension inequality&Extremizers\\
\hline
Paraboloid &$L^2(\mathbb{P}^2, \mu_2)\to L^4(\R^3)$&Gaussians\\
Cone&$L^2(\Gamma^3, \nu_3)\to L^4(\R^4)$&Exponentials\\
Hyperboloid&$L^2(\mathbb{H}^2, \lambda_2)\to L^4(\R^3)$&Do not exist\\
Sphere&$L^2(\mathbb{S}^2, \sigma_2)\to L^4(\R^3)$&Constants\\
\hline \end{tabular}
\caption{} 
 \label{tab:extremizers} 
\end{table}

\noindent The two-dimensional surfaces in question (paraboloid, hyperboloid and sphere) can all be obtained as intersections of the three-dimensional cone with appropriately chosen hyperplanes. Figure \ref{fig:conic} below illustrates this point. There, the ambient space $\R^4$ is endowed with coordinates~$(\xi,\tau)\in\R^{3+1}$, where $\xi=(\xi_1,\xi')\in\R^{1+2}$.
One cannot help noticing that the restriction of the exponential function $\exp({-|\xi|})$, which is an extremizer for the cone, to the different conic sections coincides with the corresponding extremizers given by Table \ref{tab:extremizers}. 
It is 
a constant function when $\tau=1$ (sphere, red in Fig. \ref{fig:conic})
and
a Gaussian when $\tau=-\xi_1+2$ (paraboloid, blue in Fig. \ref{fig:conic}).
Furthermore, it yields the function $\exp({-\sqrt{1+|\xi'|^2}})$ when $\tau=\sqrt{1+|\xi'|^2}$ (hyperboloid, yellow in Fig. \ref{fig:conic}).
Extemizers for the problem on the hyperboloid do not exist, but it was shown in \cite[Lemma 5.4]{Q2} that the function $f_a(\xi')=\exp({-a\sqrt{1+|\xi'|^2}})$ produces an extremizing sequence  $\{f_a/\|f_a\|_{L^2}\}$ as $a\to\infty$ for the $L^2(\lambda_2)\to L^4$ extension inequality on the hyperboloid.

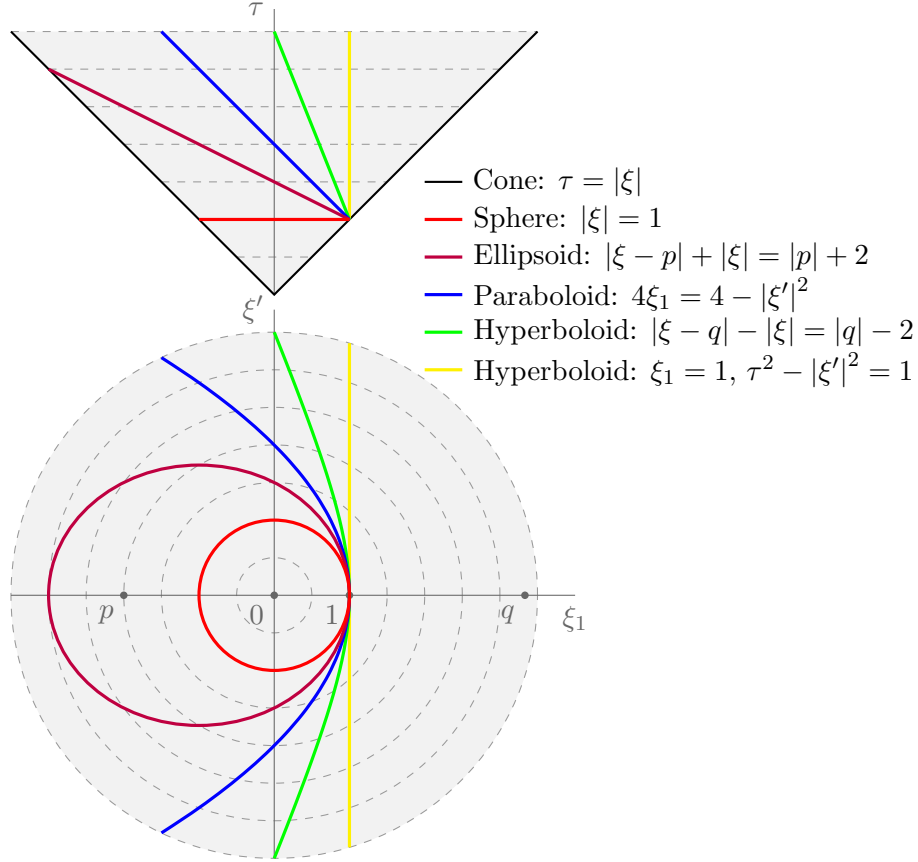
\begin{figure}[htbp]
  \centering

  \begin{tikzpicture}[scale = 1]

    \fill[black!5]
       (-3.5, 3.5) -- (0, 0) -- (3.5, 3.5) -- cycle
       (0, -4) circle (3.5);

    \foreach \l in {0.5, 1, ..., 3.6} {
      \draw[thin, dashed, black!40]
         (-\l, \l) -- (\l, \l)
         (0, -4) circle [radius = \l];
    }

    \draw[thin, black!60]
       (0, 0) -- (0, 3.8) node[left] {$\tau$}
       (0, -7.5) -- (0, -0.2) node[left] {$\xi'$}
       (-3.5, -4) -- (4, -4) node[below] {$\xi_1$};

    \fill[black!60]
       (3.333, -4) circle [radius = 0.05] node[below left] {$q$}
       (1, -4) circle [radius = 0.05] node[below left] {$1$}
       (0, -4) circle [radius = 0.05] node[below left] {$0$}
       (-2, -4) circle [radius = 0.05] node[below left] {$p$};

    \draw[thick, text=black]
       (2, 1.5) -- (2.5, 1.5) node[right] {Cone: $\tau = \abs{\xi}$}
       (-3.5, 3.5) -- (0, 0) -- (3.5, 3.5);

    \draw[very thick, yellow, text=black]
       (2, -1) -- (2.5, -1) node[right] {Hyperboloid: $\xi_1 = 1$, $\tau^2 - \abs{\xi'}^2 = 1$}
       (1, 1) -- (1, 3.5)
       (1, -7.354) -- (1, -0.646);

    \draw[very thick, green, text=black]
       (2, -0.5) -- (2.5, -0.5) node[right] {Hyperboloid: $\abs{\xi - q} - \abs{\xi} = \abs{q} - 2$}
       (0, 3.5) -- (1, 1)
       [domain=-3.5:3.5, smooth, variable=\t] plot ({(8.75-sqrt(5.25*\t*\t + 12.25))/5.25}, \t-4); 

    \draw[very thick, blue, text=black]
       (2, 0) -- (2.5, 0) node[right] {Paraboloid: $4 \xi_1 = 4 - \abs{\xi'}^2$}
       (-1.5, 3.5) -- (1, 1)
       [domain=-3.162:3.162, smooth, variable=\t] plot (1-\t*\t/4, \t-4);

    \draw[very thick, purple, text=black]
       (2, 0.5) -- (2.5, 0.5) node[right] {Ellipsoid: $\abs{\xi - p} + \abs{\xi} = \abs{p} + 2$}
       (-3, 3) -- (1, 1)
       (-1, -4) circle [x radius = 2, y radius = 1.732];

    \draw[very thick, red, text=black]
       (2, 1) -- (2.5, 1) node[right] {Sphere: $\abs{\xi} = 1$}
       (-1, 1) -- (1, 1)
       (0, -4) circle [radius = 1];

  \end{tikzpicture} 

  \caption{Conic sections}
  \label{fig:conic}
\end{figure}

\appendix
\section{Integration on manifolds using delta calculus}

Let $\mM$ be a smooth $k$-dimensional submanifold of $\R^d$, with~\mbox{$0 < k < d$}.
On $\mM$ one can  define a canonical measure $\sigma = \sigma_\mM$ which is naturally induced by the Euclidean metric structure of $\R^d$.
Integration on the manifold $\mM$ can be rigorously defined by means of differential forms \cite{Spi},
but the actual computation of integrals of the form
\begin{equation*}
  \int_\mM \varphi(x) \d\sigma
\end{equation*}
often appears to be a challenging task when $\mM$ is a nontrivial manifold.
The theory of distributions comes into help since these integrals can be viewed as pull-backs
of Dirac delta distributions \cite{Ho}.

Suppose that $\mM$ is (locally) implicitly described as the zero level set
of a $C^1$ function~$f$ defined on an open subset $\Omega$ of $\R^d$ and taking values in $\R^{d - k}$,
\begin{equation*}
  \mM = \mM_f := \insieme{x \in \Omega}{f(x) = 0},
\end{equation*}
and assume that the Jacobian matrix $Df(x)$ of the map $f$ has maximal rank at every point $x \in \Omega$.
Consider the usual Dirac delta distribution $\ddirac{\cdot}$ on $\R^{d - k}$ defined by
\begin{equation*} 
  \ip{\ddirac{\cdot}, \varphi} = \int_{\R^{d - k}} \ddirac{y} \varphi(y) \d y  = \varphi(0),
\end{equation*}
for any smooth test functions $\varphi$.
We would like to make sense of the composition $\ddirac{f(x)}$ as a distribution on $\Omega$.
There are several ways to define such a composition.
One possibility is to approximate the delta distribution with a family of smooth bump functions,
\begin{equation*}
  \gamma_\epsi(y) = \epsi^{k - d} \gamma(\epsi^{-1} y), \quad
  \text{where} \quad
  0 \le \gamma \in C^\infty_c(\R^{d - k}), \quad
  \int_{\R^{d - k}} \gamma(y) \d y = 1.
\end{equation*}
One is led to define 
\begin{equation*}
  \int_{\Omega} \ddirac{f(x)} \varphi(x) \d x = \ip{\ddirac{f}, \varphi}
  := \lim_{\epsi \to 0} \int_{\Omega} \gamma_\epsi\tonde{f(x)} \varphi(x) \d x, 
\end{equation*}
for every $\varphi \in C^\infty_c(\Omega)$.
This limit converges to the integral
\begin{equation} \label{eq:dddef}
  \int_{\Omega} \ddirac{f(x)} \varphi(x) \d x = \int_\mM \frac{\varphi(x)}{J_f(x)} \d\sigma,
\end{equation}
where $J_f(x)$ is the Jacobian determinant of the function $f$ at the point $x$,
\begin{equation*}
  J_f(x) = \sqrt{\det\tonde{Df(x)\cdot Df(x)^t}} = \abs{\d f_1(x) \wedge \d f_2(x) \wedge \dots \wedge \d f_{d - k}(x)},
\end{equation*}
and here $\wedge$ stands for the wedge product of differential forms.
Therefore integrals over the manifold $\mM$ can be expressed as integrals over $\R^d$ by means of delta distributions:
\begin{equation*}
  \int_\mM \varphi(x) \d\sigma = \int_{\R^d} \ddirac{f(x)} \varphi(x) J_f(x) \d x.
\end{equation*}

\noindent Let us now have a look at some simple but useful algebraic rules which follow easily from these definitions,
and allow for manipulation of  integrals with delta distributions.
These rules have been used in the previous sections to carry out explicit computations
of convolution  measures defined on the various manifolds considered there.

\subsection{Hypersurfaces}

In the codimension $1$ case, $k = d - 1$, we have that $\mM$ is a hypersurface defined by a scalar function.
Identity \eqref{eq:dddef} simplifies to
\begin{equation} \label{eq:hypersurf}
  \int_{\Omega} \ddirac{f(x)} \varphi(x) \d x = \int_\mM \frac{\varphi(x)}{\abs{\nabla f(x)}} \d\sigma.
\end{equation}

For example, in the case of the unit sphere $\mathbb{S}^{d-1}$ equipped with surface measure $\sigma$,
we have that  $f(x) = \abs{x} - 1$, $\abs{\nabla f(x)} = 1$, and
\begin{equation*}
  \int_{\mathbb{S}^{d-1}} \varphi(x) \d\sigma = \int_{\R^d} \ddirac{\abs{x} - 1} \varphi(x) \d x.
\end{equation*}

For another example, consider the region $\Omega_- = \insieme{x \in \Omega}{f(x) < 0}$, with boundary inside~$\Omega$ given by~$\mM$,
and with outgoing unit normal vector given by $\nu = \frac{\nabla f}{\abs{\nabla f}}$.
Combining the divergence theorem with formula~\eqref{eq:hypersurf}, we have the following: 
For any smooth vector field $V(x)$ with compact support in $\Omega$,
\begin{equation*}
  \int_{\Omega_-} (\nabla \cdot V)(x) \d x = \int_\mM V(x) \cdot \nu(x) \d\sigma = 
  \int_\Omega \ddirac{f(x)} \, V(x) \cdot \nabla f(x) \d x.
\end{equation*}

\subsection{Products of delta distributions}

We can make sense of the product of two delta distributions by simply setting
\begin{equation*}
  \ddirac{f(x)} \ddirac{g(x)} := \ddirac{f(x) \\ g(x)},
\end{equation*}
whenever the right-hand side is well-defined
as a distribution supported on the manifold $\mM_{(f,g)} = \mM_f \intersection \mM_g$.
We can also make sense of the integration of the distribution $\ddirac{f(x)}$ over the manifold $\mM_g$ as follows:
\begin{align}
  \int_{\mM_g} \ddirac{f(x)} \varphi(x) \d\sigma_{\mM_g} 
  &=\int_\Omega \ddirac{f(x) \\ g(x)} \varphi(x) J_g(x) \d x \label{prodrule}\\
 & = \int_{\mM_f \intersection \mM_g} \varphi(x) \frac{J_g(x)}{J_{(f,g)}(x)} \d\sigma_{\mM_{(f,g)}}.\notag
\end{align}
In the case of codimension $d - k > 1$, 
the delta distribution can always be viewed (locally) as a product of $d - k$ delta distributions on hypersurfaces.

For example, let $\sigma$ denote the surface measure on the $(d-1)$-dimensional unit sphere in~$\R^d$,
and let $h$ be a smooth function on the unit sphere. 
The convolution $h \sigma \ast h \sigma$ is supported on the ball of radius $2$ centered at the origin,
and its value at a point $x$ inside that ball can be written as an integral
over the $(d-2)$-dimensional sphere $\Gamma_x$ obtained as the intersection of the unit sphere with its translate by $x$.
\begin{figure}[htbp]
  \centering
  \begin{tikzpicture} [scale = 2]
    \draw[thick, ->] (0, 0) node[below] {$0$} -- (1.6, 0.8) node[below] {$x$};
    \draw[thick, ->] (0, 0) -- node[midway, above] {$1$} (0.6, 0.8) node[above right] {$y$};
    \draw[dashed] (0.6, 0.8) -- node[midway, above] {$1$} (1.6, 0.8);
    \draw[thick, blue] (0, 0) circle [radius = 1];
    \draw[thick, blue] (1.6, 0.8) circle [radius = 1];
    \draw[very thick, red] (0.6, 0.8) -- (1, 0);
    \fill (0, 0) circle [radius=0.03] (0.6, 0.8) circle [radius=0.03] (1.6, 0.8) circle [radius=0.03];
    \draw[black!50, ->] (1.5, -0.5) node[text=black, right] {$\Gamma_x$} .. controls (0.5, -0.5) .. (0.85, 0.3);  

  \end{tikzpicture}
  \caption{$\Gamma_x = \mathbb{S}^{d-1} \intersection (x + \mathbb{S}^{d-1})$}
  \label{fig:Gammax}
\end{figure}
The sphere $\Gamma_x$ has radius~$\sqrt{1 - \frac{\abs{x}^2}4}$.
We can write~$\Gamma_x = \mM_f \intersection \mM_g$, with $f(y) = \abs{y} - 1$ and $g(y) = \abs{x - y} - 1$.
If $y \in \Gamma_x$, then 
\begin{equation*}
 J_{(f,g)}(y) = \abs{\d f(y) \wedge \d g(y)} = \abs{\frac{x - y}{\abs{x - y}} \wedge \frac{y}{\abs{y}}} = \abs{x \wedge y} = 
  \abs{x} \sqrt{1 - \frac{\abs{x}^2}4}.
\end{equation*}
Using formula~\eqref{prodrule}, we obtain a generalization of formula~\eqref{2foldsphconv}: If $|x|\leq 2$, then 
\begin{align*}
  (h \sigma \ast h \sigma) (x) 
  =& \int_{\R^d} \ddirac{\abs{x - y} -1 \\ \abs{y} - 1} h(x - y) h(y) \d y  \\
  =& \int_{\Gamma_x} \frac{h(x-y) h(y)}{J_{(f,g)}(y)} \d\sigma_y 
  =\frac2{\abs{x} \sqrt{4 - \abs{x}^2}} \int_{\Gamma_x} h(x-y) h(y) \d\sigma_y \\
  =& \frac{V_{d-2}}{\abs{x}} \tonde{1 - \frac{\abs{x}^2}4}^{\frac{d - 3}2} \fint_{\Gamma_x} h(x-y) h(y) \d\sigma_y, 
\end{align*}
where $V_{d-2}$ is the surface volume of the $(d-2)$-dimensional unit sphere
and $\fint$ denotes the averaged integral.

\subsection{Multiplication by scalar functions}

Let $\alpha:\Omega\to\R$ be a positive $C^1$ scalar function.
Then, on the manifold $\mM_{\alpha f} = \mM_f$, we have that $J_{\alpha f} = \alpha^{d-k} J_f$,
and consequently
\begin{equation}\label{scalarmult}
  \ddirac{\alpha(x) f(x)} = \alpha(x)^{-d + k} \ddirac{f(x)}.
\end{equation}
In particular, if $\alpha$ and $\beta$ are smooth positive scalar functions which coincide on $\mM_f$,
then~\mbox{$\ddirac{\alpha f} = \ddirac{\beta f}$}.
For example, on the unit sphere, we have that
\begin{equation*}
  \ddirac{\abs{x}^2 - 1} = \frac1{\abs{x} + 1} \ddirac{\abs{x} - 1} = \frac12 \ddirac{\abs{x} - 1}.
\end{equation*}
In a similar way, on the null cone, we have that
\begin{equation*}
  \ddirac{t^2 - \abs{x}^2} = \frac{\ddirac{t - \abs{x}}}{t + \abs{x}}  - \frac{\ddirac{t + \abs{x}}}{t - \abs{x}} =
  \frac{\ddirac{t - \abs{x}}}{2 \abs{x}} + \frac{\ddirac{t + \abs{x}}}{2\abs{x}}. 
\end{equation*}

\subsection{Change of variables}

Suppose that $x = \Psi(y)$ is a local diffeomorphism on $\R^d$.
As for any distribution, we have the usual change of variables rule,
\begin{equation}\label{changevars1}
  \int_\Omega \ddirac{f(x)} \varphi(x) \d x = 
  \int_{\Psi^{-1}(\Omega)} \ddirac{f(\Psi(y))} \varphi(\Psi(y)) \abs{\det D\Psi(y)} \d y. 
\end{equation}

\noindent If instead we consider the composition $g(x) = \Phi(f(x))$,
with $\Phi$ a local diffeomorphism defined on a neighborhood of $0$ in $\R^{d - k}$ and satisfying $\Phi(0) = 0$,
then $\mM_g = \mM_f$, and
\begin{equation*}
  \ddirac{\Phi(f(x))} = \abs{\det D\Phi(f(x))}^{-1} \ddirac{f(x)}.
\end{equation*}
In particular, if $L$ is a nonsingular $d \times d$ matrix and $M$ is a nonsingular $(d - k) \times (d - k)$ matrix, 
then we have that
\begin{equation*}
  \int_{L^{-1}(\Omega)} \ddirac{M f(L x)} \varphi(x) \d x 
  = \frac 1{\abs{{\det L}\,{\det M}}} \int_\Omega \ddirac{f(y)} \varphi(L^{-1} y) \d y.
\end{equation*}

\section*{Acknowledgements}
The second author expresses his gratitude to Dmitriy Bilyk, Feng Dai, Vladimir Temlyakov and  Sergey Tikhonov for organizing 
the workshop
{\it Function spaces and high-dimensional approximation}
at the 
Centre de Recerca Matem\`atica-ICREA, May 2016,
whose hospitality is greatly appreciated.

\end{document}